\newcommand{\ee}{\mathbb{E}}
\newcommand{\pp}{\mathbb{P}}
\DeclareMathOperator{\sgn}{sgn}
\DeclareMathOperator{\Cov}{Cov}
\newcommand{\cn}{\mathcal{N}}
\newcommand{\cl}{\mathcal{L}}
\newcommand{\dto}{\stackrel{d}{\to}}
\newcommand{\de}{\stackrel{d}{=}}
\newcommand{\toi}{\to\infty}
\newcommand{\asto}{\stackrel{a.s.}{\to}}
\newtheorem{theorem}{Theorem}
\newtheorem{lemma}[theorem]{Lemma}
\theoremstyle{remark}
\newtheorem{remark}{Remark }
\begin{document}
\bibliographystyle{plainnat}
\setcitestyle{numbers}
\title{On the functional limits for partial sums under stable law}
\author{Khurelbaatar Gonchigdanzan}
\address{Department of Mathematical Sciences, University of Wisconsin, Stevens Point, Wisconsin 54481, USA}
\email{hurlee@uwsp.edu}

\author{Kamil Marcin Kosi\'nski}
\address{Korteweg-de Vries Institute for Mathematics, Universiteit van Amsterdam, Plantage Muidergracht 24, 1018 TV
Amsterdam, The Netherlands}
\curraddr{Korteweg-de Vries Institute for Mathematics, University of Amsterdam, P.O. Box 94248,
1090 GE Amsterdam, The Netherlands and\\
EURANDOM, Technishe Universiteit Eindhoven, Postbus 513, 5600 MB Eindhoven, The Netherlands}
\email{K.M.Kosinski@uva.nl}

\date{May 7, 2009}
\subjclass[2000]{Primary G0F05}
\keywords{Almost sure limit theorem, Logarithmic average, Stable law, Product of partial sums}

\begin{abstract}
For the partial sums $(S_n)$ of independent random variables we define a stochastic process
$s_n(t):=(1/d_n)\sum_{k \le [nt]} ({S_k}/{k}-\mu)$ and prove that
$$(1/{\log N})\sum_{n\le N}(1/n)\mathbf {I}\bigl\{s_n(t)\le x\bigr\} \to G_t(x)\quad \text{a.s.}$$ if and only if
$(1/{\log N})\sum_{n\le N} (1/n)\pp\bigl(s_n(t)\le x\bigr) \to G_t(x)$,
for some sequence $(d_n)$ and distribution $G_t$.
We also prove an almost sure functional limit theorem for the product of partial sums of i.i.d. positive random variables attracted to an $\alpha$-stable law with $\alpha\in (1,2]$.
\end{abstract}

\maketitle

\section{Introduction and main result}
In the past two decades many interesting extensions of the classical central limit theorem (CLT) have been obtained. One of the extensions is known as almost sure central limit theorem (ASCLT) which is discovered by Brosamler (1988) and Schatte (1988) and has been extensively studied for independent random variables as well as dependent variables.  Motivated by ASCLT, almost sure versions of many limit theorems in probability and statistics have been obtained in the past. It is known that for i.i.d. r.v.'s ASCLT holds under the same assumptions as CLT but in general, the existence of the weak limit does not always imply the almost sure limiting result. For more discussions about the  early results on ASCLT we refer to \citet{Berkes1}.

In this note we consider the product of partial sums, denoted by $S_n$, of a sequence of random variables attracted to a stable distribution and its limit distributions. 
\citet{Rempala} established the limit distribution of the product of partial sums of a sequence of i.i.d. positive r.v.'s with mean $\mu$ and variance $\sigma^2$:
\begin{equation}
\label{eq1}
\left(\dfrac{\prod_{k=1}^{n}S_k}{n!\mu^n}\right)^{\mu/(\sigma
\sqrt n)}  \dto e^{\sqrt 2 \cn(0,1)}.
\end{equation}

\citet{Zhang} proves a weak invariance principle of \eqref{eq1} for i.i.d. r.v.'s. Recently, \citet{Kamil} has shown that the weak invariance principle still holds when the partial sums are attracted to an $\alpha$-stable law with $\alpha \in (1,2]$ which also generalizes the earlier 
result by \citet{Qi}.

Throughout this paper, $\log\log x$ and $\log x$ stand for $\ln\ln(\max\{x,e^e\})$ and
$\ln(\max\{x,e\})$ respectively. We also use the notations $a_n \ll b_n$ for $a_n=O(b_n)$ and $\mathbf I(A)$ for the indicator function on a set $A$.

Our main result in this note is to establish an almost sure version of the result by \citet{Kamil} that can generalize the early results by \citet{Hurlee1} and \citet{Hurlee2,Hurlee3}.

Recall that a sequence of and i.i.d. r.v.'s $\{X_n:n\ge1\}$ is said to be
in the domain of attraction of a stable law $\cl$ if there exist sequences $(a_n)$
and $(b_n)$ such that
\[
\frac{S_n-b_n}{a_n}\dto\cl_{\alpha},
\]
where $\cl_{\alpha}$ is one of the stable distributions with index $\alpha\in(0,2]$. Moreover,
let $\{\cl_{\alpha}(s):s\ge 0\}$ be the $\alpha$-stable L\'evy process corresponding to $\cl_\alpha$,
that is $\cl_{\alpha}(1)\de\cl_{\alpha}$.

The following theorem is well known (see, e.g., \citet{Hall}).
\begin{theorem}[Stability Theorem]
\label{stabthe}
The general stable law is given, to within type, by a characteristic function of one of the following forms:
\begin{enumerate}
\item $\phi(t)=\exp(-t^2/2)$ (normal case, $\alpha=2$);
\item $\phi(t)=\exp(-|t|^\alpha(1-i\beta\sgn(t)\tan(\frac{1}{2}\pi\alpha)))$
$(0<\alpha<1$ or $1<\alpha<2$, $-1\le\beta\le 1)$;
\item $\phi(t)=\exp(-|t|(1+i\beta\sgn(t)2/\pi\log|t|)$ $(\alpha=1$, $-1\le\beta\le 1)$.
\end{enumerate}
\end{theorem}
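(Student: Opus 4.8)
The plan is to combine the scaling structure forced by stability with the L\'evy--Khintchine representation; throughout write $\phi=e^\psi$. Stability means that for i.i.d.\ copies $X_1,X_2,\dots$ of $X$ one has $X_1+\dots+X_n\de a_nX+b_n$ for suitable $a_n>0$, $b_n\in\rr$, and rearranging exhibits $X$ as a sum of $n$ i.i.d.\ summands; hence the law of $X$ is infinitely divisible, $\phi$ never vanishes, $\psi:=\log\phi$ is well defined and continuous with $\psi(0)=0$, and
\[
\psi(t)=i\gamma t-\tfrac12\sigma^2t^2+\int_{\rr\setminus\{0\}}\bigl(e^{itx}-1-itx\,\mathbf I(|x|\le1)\bigr)\,\nu(dx)
\]
for some $\gamma\in\rr$, $\sigma\ge0$ and L\'evy measure $\nu$. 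First I would pin down the norming: grouping $mn$ summands into $m$ blocks of size $n$ and iterating the stability relation gives $a_{mn}=a_ma_n$, while $|\phi(a_nt)|=|\phi(t)|^n$ together with non-vanishing of $\phi$ forces $a_n\toi$, and the convergence-of-types theorem then yields $a_{n+1}/a_n\to1$; a completely multiplicative sequence with this regularity must be $a_n=n^{1/\alpha}$ for some $\alpha>0$.

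Substituting $a_n=n^{1/\alpha}$ back gives the functional equation $n\psi(t)=\psi(n^{1/\alpha}t)+ib_nt$, valid for all $n\in\nn$, $t\in\rr$. By uniqueness of the L\'evy--Khintchine representation this identity passes to the three ingredients of $\psi$ one by one: the Gaussian parts give $n\sigma^2=n^{2/\alpha}\sigma^2$, so $\sigma=0$ unless $\alpha=2$; and the jump parts give the self-similarity $n\,\nu(A)=\nu(n^{-1/\alpha}A)$ for Borel $A\subset\rr\setminus\{0\}$. Classifying the measures with this scaling (separately on each half-line, a Radon--Nikodym argument against Haar measure on the multiplicative group $(0,\infty)$) shows $\nu(dx)=\bigl(c_+\mathbf I(x>0)+c_-\mathbf I(x<0)\bigr)|x|^{-1-\alpha}\,dx$ with $c_\pm\ge0$; the integrability requirements on a L\'evy measure then force $0<\alpha<2$. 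When $\alpha=2$ we are left with $\nu=0$, i.e.\ the Gaussian characteristic function of case~(1).

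For $0<\alpha<2$ it remains to evaluate
\[
\int_0^\infty\bigl(e^{itx}-1-itx\,\mathbf I(x\le1)\bigr)c_+x^{-1-\alpha}\,dx
\]
together with its mirror image over $(-\infty,0)$. This is the classical computation --- done by rotating the contour, or by analytic continuation in $\alpha$ --- whose outcome is $|t|^\alpha$ times a complex constant with real part a negative multiple of $c_++c_-$ and imaginary part a multiple of $(c_+-c_-)\sgn(t)$; for $\alpha\neq1$ the ratio of imaginary to real part is $\tan(\tfrac12\pi\alpha)$, whereas for $\alpha=1$ the $x^{-2}$ kernel throws off an extra $\log|t|$ term with relative weight $2/\pi$. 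Collecting everything, absorbing the leftover linear-in-$t$ contributions into $\gamma$ (which only moves the location and is harmless ``within type''), and rescaling so that the coefficient of $|t|^\alpha$ in the real part equals $1$, one obtains forms~(2) and~(3) with $\beta=(c_+-c_-)/(c_++c_-)$; the bound $-1\le\beta\le1$ is then automatic from $c_\pm\ge0$.

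The step I expect to be the real obstacle is the bookkeeping in the last paragraph: performing the integral with the truncation $\mathbf I(x\le1)$ in place, cleanly separating the genuinely shape-determining part from the part that is a mere relocation, and doing this with enough uniformity in $\alpha$ to recover the $\alpha=1$ logarithm with precisely the constants ($1$, $\beta$, $2/\pi$) displayed in the statement. A secondary irritant is making the self-similarity argument for $\nu$ fully rigorous; alternatively one could bypass L\'evy--Khintchine and attack the functional equation $n\psi(t)=\psi(n^{1/\alpha}t)+ib_nt$ directly, splitting into real and imaginary parts (the imaginary part being affine, with particular solutions $\gamma t$ when $\alpha\neq1$ and $\delta t\log|t|$ when $\alpha=1$), but then the constraints $\alpha\le2$ and $|\beta|\le1$ have to be recovered afterwards from positive-definiteness, which is not obviously easier.
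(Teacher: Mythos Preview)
Your proposal sketches a perfectly standard route to the canonical form of stable characteristic functions, and the outline is sound: infinite divisibility from stability, L\'evy--Khintchine, the multiplicative functional equation forcing $a_n=n^{1/\alpha}$, self-similarity of $\nu$, and then the explicit evaluation of the resulting power-law integral. The worries you flag (bookkeeping at $\alpha=1$, recovering $|\beta|\le1$) are the genuine sticking points in any careful write-up, and you have identified them accurately.

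That said, there is nothing in the paper to compare your argument against. The paper does not prove this theorem: it is quoted as a classical fact with the sentence ``The following theorem is well known (see, e.g., \citet{Hall})'' and no further justification. In other words, the paper's ``proof'' is a bare citation to Hall (1981), so the exercise of matching strategies is vacuous here. Your sketch is a reasonable reconstruction of the standard argument (essentially the Gnedenko--Kolmogorov or L\'evy approach), but the paper itself neither endorses nor departs from it.
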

It is worth mentioning that in Theorem \ref{stabthe}, $\beta$ is the skewness parameter. In our paper, $\beta=1$ since $X_1$ is a positive random variable.

The first result of this note is the following almost sure functional limit theorem:
\begin{theorem}
\label{theorem1}
Let $\{X_n:n \ge 1\}$ be a sequence of i.i.d. positive random variables with $\ee X_1=\mu$ in the domain of attraction of an
$\alpha$-stable law $\cl_{\alpha}$ with $\alpha \in (1,2]$ and characteristic
function as in Theorem \ref{stabthe}. Define a process $\{\pi_n(t): 0 \le t \le 1\}$ by
\[
\pi_n(t):=\left(\prod_{k=1}^{[nt]} \dfrac{S_k}{\mu k}\right)^{{\mu}/{a_n}},
\]
where $(a_n)$ is a sequence of positive numbers that satisfies $({S_n-\mu n})/{a_n} \dto \cl_{\alpha}$ as $n\toi$.
Then for any real $x$
\[
\dfrac{1}{\log N}
\sum_{n=1}^{N}\dfrac{1}{n}{\mathbf I}\left(
\pi_n(t) \le x \right)\asto F_t(x)  \enskip \text{as} \enskip N \toi
\]
where $F_t$ is the distribution function of the random variable $\exp\left(\int_{0}^{t}\frac{\cl_{\alpha}(s)}{s}ds\right)$.
\end{theorem}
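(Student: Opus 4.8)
The plan is to pass to logarithms, linearise $\log\pi_n(t)$, reduce the assertion to an almost sure logarithmic limit theorem for the additive process $s_n(t):=a_n^{-1}\sum_{k\le[nt]}(S_k/k-\mu)$ — which is exactly the process to which the general ``if and only if'' theorem of this note applies, taken with $d_n=a_n$ — and then read off the limiting law from the weak invariance principle of \citet{Kamil}. First write $\log\pi_n(t)=\frac{\mu}{a_n}\sum_{k\le[nt]}\log\bigl(1+\frac{S_k-\mu k}{\mu k}\bigr)$. Since $\ee X_1=\mu\in(0,\infty)$, the strong law gives $S_k/(\mu k)\to1$ a.s., so for all large $k$ the argument of the logarithm lies in $(\tfrac12,\tfrac32)$ and $|\log(1+u)-u|\le Cu^2$ applies; hence
\[
\bigl|\log\pi_n(t)-s_n(t)\bigr|\le\frac{C}{a_n}\sum_{k\le[nt]}\frac{(S_k-\mu k)^2}{k^2}+\frac{R(\omega)}{a_n},\qquad R(\omega)<\infty .
\]
The Marcinkiewicz--Zygmund strong law in the domain of attraction of $\cl_{\alpha}$ gives $S_k-\mu k=o(k^{1/\beta})$ a.s. for every $\beta<\alpha$; choosing $\beta\in\bigl(\tfrac{2\alpha}{\alpha+1},\alpha\bigr)$ (a nonempty interval, as $\alpha>1$) the sum on the right is $o(n^{2/\beta-1})$, and since $a_n$ is regularly varying with index $1/\alpha$ the whole right-hand side tends to $0$ a.s. Consequently $\mathbf I(\pi_n(t)\le x)=\mathbf I(s_n(t)\le\log x)$ except on $\{|s_n(t)-\log x|\le\varepsilon_n\}$ with $\varepsilon_n\to0$, so — once the limit is known to be continuous — logarithmic averaging annihilates this discrepancy, and it suffices to prove
\[
\frac{1}{\log N}\sum_{n\le N}\frac1n\mathbf I\bigl(s_n(t)\le y\bigr)\asto G_t(y)\qquad\text{for every }y\in\rr ,
\]
where $G_t(y):=\pp(W_t\le y)$ and $W_t:=\int_0^t\cl_{\alpha}(s)/s\,ds$; the choice $y=\log x$ then yields $F_t(x)=G_t(\log x)$ for $x>0$ (and both sides vanish for $x\le0$).

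For the limiting law: an integration by parts in $s$ rewrites $W_t=\int_{(0,t]}\log(t/u)\,d\cl_{\alpha}(u)$, a deterministic-kernel integral against the $\alpha$-stable L\'evy process with $\int_0^t(\log(t/u))^\alpha\,du=t\,\Gamma(\alpha+1)<\infty$; thus $W_t$ is $\alpha$-stable, so $G_t$ — and hence $F_t$ — is continuous. The weak invariance principle of \citet{Kamil} (which reduces to \citet{Zhang} when $\alpha=2$) gives $s_n(t)\dto W_t$, whence $\pp(s_n(t)\le y)\to G_t(y)$ for every $y$, and the Toeplitz lemma propagates this to $\frac{1}{\log N}\sum_{n\le N}\frac1n\pp(s_n(t)\le y)\to G_t(y)$. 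The general ``if and only if'' theorem of this note now delivers the displayed a.s.\ statement for $s_n(t)$, and the linearisation step completes the proof.

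The substance of the argument — and the step I expect to be the real obstacle, whether one cites the general theorem or reproves it here — is the implication ``averaged probabilities converge $\Rightarrow$ averaged indicators converge a.s.'', i.e.\ $\frac{1}{\log N}\sum_{n\le N}\frac1n\bigl(\mathbf I(s_n(t)\le y)-\pp(s_n(t)\le y)\bigr)\asto0$. By the usual second-moment method this follows from a covariance bound $\bigl|\Cov(\mathbf I(s_m(t)\le y),\mathbf I(s_n(t)\le y))\bigr|\ll(m/n)^{\gamma}$ for $m\le n$ and some $\gamma=\gamma(\alpha)>0$, which forces $\Var\bigl(\sum_{n\le N}(\,\cdot\,)/n\bigr)\ll\log N$; a subsequence argument together with the monotonicity in $N$ of the partial sums then yields the a.s.\ convergence. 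To obtain the covariance bound one splits $S_k-\mu k=(S_{[mt]}-\mu[mt])+((S_k-S_{[mt]})-\mu(k-[mt]))$ for $k>[mt]$, writing $s_n(t)=\tilde s_{m,n}(t)+r_{m,n}$ with $\tilde s_{m,n}(t)$ measurable with respect to $\sigma(X_{[mt]+1},\dots,X_{[nt]})$ — hence independent of $s_m(t)$ — and a remainder $r_{m,n}$ that a scaling/tail estimate for weighted sums in a stable domain bounds by $O((m/n)^{1/\alpha}\log(n/m))$; independence reduces the covariance to $\pp(|\tilde s_{m,n}(t)-y|\le|r_{m,n}|)$, and a uniform anticoncentration bound $\sup_y\pp(|\tilde s_{m,n}(t)-y|\le\varepsilon)\ll\varepsilon+o(1)$ (available because $\tilde s_{m,n}(t)$ converges weakly to a stable law with bounded density) converts this into the required power of $m/n$. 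I expect the genuinely delicate points to be the uniformity in $y$ of that anticoncentration estimate and the sharp tail bounds for the weighted stable sums making up $r_{m,n}$.
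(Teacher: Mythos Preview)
Your overall plan --- linearise $\log\pi_n(t)$, invoke the weak invariance principle of \citet{Kamil} to identify the limit, then appeal to Theorem~\ref{theorem2} to upgrade averaged probabilities to averaged indicators --- is exactly the paper's proof. The gap is that you never verify the hypotheses of Theorem~\ref{theorem2} for $d_n=a_n$. Condition~\eqref{eq3} follows from Potter bounds for the regularly varying $a_n$, which you allude to; but condition~\eqref{eq4}, the moment bound $\ee|(S_n-\mu n)/a_n|\ll e^{\gamma'(\log n)^{1-\varepsilon}}$, is nowhere addressed. In the domain of attraction of an $\alpha$-stable law with $\alpha>1$ this quantity is in fact bounded --- the paper invokes Theorem~6.2 of \citet{DeAcosta} for this --- and once stated the check is immediate, but it is a genuine ingredient your write-up omits.

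Your final paragraph sketches a direct route to the implication \eqref{eq6}$\Rightarrow$\eqref{eq5} via anticoncentration of indicators; this is \emph{not} how the paper (following \citet{Berkes2}) argues. There indicators are replaced by bounded Lipschitz test functions $g$, so that $|\ee(\xi_k\xi_l)|$ is controlled by the first moment $(d_k/d_l)\,\ee\max_{j\le k}|S_j-j\mu|/d_k$ up to logarithmic factors (Lemma~\ref{lemma3}), and~\eqref{eq4} closes the estimate --- no small-ball bound enters. Your indicator approach would need a \emph{uniform} concentration-function inequality $\sup_{m,n}\sup_y\pp(|\tilde s_{m,n}(t)-y|\le\varepsilon)\le C\varepsilon$ (the ``$+o(1)$'' you wrote destroys the power of $m/n$), and your ``scaling/tail estimate'' for $r_{m,n}$, once you take expectations, is again $(a_m/a_n)\log(n/m)$ times $\ee|(S_{[mt]}-[mt]\mu)/a_{[mt]}|$ --- i.e.\ condition~\eqref{eq4}. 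So the Lipschitz route is both cleaner and rests on the same moment input you left out.
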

\begin{remark}
If $X_1$ has finite variance equal to $\sigma^2$ then $\alpha=2$, $\cl_\alpha\de\cn(0,1)$ and
$a_n\sim\sigma\sqrt n$, thus Theorem \ref{theorem1} implies the main
result of \citet{Hurlee3} which in particular yields the result
of \citet{Hurlee1} Theorem 2 since it is easy to verify that
\[
\int_{0}^{1}\frac{\cl_{\alpha}(s)}{s}ds\de\sqrt 2\,\cn(0,1).
\]
Moreover, \citet{Kamil} showed that for any $\alpha\in(1,2]$
\[
\int_{0}^{1}\frac{\cl_{\alpha}(s)}{s}ds\de(\Gamma(\alpha+1))^{1/\alpha}\cl_\alpha,
\]
hence Theorem \ref{theorem1} also yields the result of \citet{Hurlee2} Theorem 1.1.
\end{remark}

Our next result is the following Berkes-Dehling type of theorem (\citet[Theorem 2]{Berkes2}).
\begin{theorem}
\label{theorem2}
Let $\{Y_n:n\ge 1\}$ be a sequence of independent random variables and
$S_n=Y_1+\cdots+Y_n$. Let $(d_n)$ be a sequence of positive numbers such that
\begin{equation}
\label{eq3}
\frac{d_l}{d_k}\gg \left(\frac lk\right)^{\gamma}
\quad (l\ge k\ge n_0)
\end{equation}
for some $\gamma>0$ and $n_0 \ge 1$ and
\begin{equation}
\label{eq4}
\ee\left|\dfrac{S_n-\mu n}{d_n}\right| \ll e^{\gamma'(\log n)^{1-\varepsilon}}
\end{equation}
for some constant $\mu$ and $\gamma'\in(0,\gamma)$. Then for any distribution $G_t$,
\begin{equation}
\label{eq5}
\frac 1{\log N}\sum_{n=1}^N \frac1n
\mathbf {I}\biggl(\dfrac{1}{d_n}
\sum_{k=1}^{[nt]}\left(\dfrac{S_k}{k}-\mu\right)\le x\biggr)\asto G_t(x) \quad \text{as} \quad N \toi
\end{equation}
if and only if
\begin{equation}
\label{eq6}
\frac 1{\log N}\sum_{n=1}^N \frac1n
\pp\biggl(\dfrac{1}{d_n}
\sum_{k=1}^{[nt]}\left(\dfrac{S_k}{k}-\mu\right) \le x\biggr) \to G_t(x)\quad \text{as} \quad N \toi.
\end{equation}
\end{theorem}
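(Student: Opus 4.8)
The plan is to establish the two implications of the equivalence separately. The direction \eqref{eq5}$\Rightarrow$\eqref{eq6} is immediate: writing $\Lambda_N(x)$ for the random logarithmic average on the left of \eqref{eq5}, one has $0\le\Lambda_N(x)\le(\log N)^{-1}\sum_{n\le N}n^{-1}\ll 1$ for all $N$, so the almost sure convergence $\Lambda_N(x)\to G_t(x)$ together with bounded convergence gives $\ee\Lambda_N(x)\to G_t(x)$, and $\ee\Lambda_N(x)$ is precisely the deterministic average in \eqref{eq6}. For the converse, abbreviate $T_n:=\sum_{k\le[nt]}(S_k/k-\mu)$, so that the event in \eqref{eq5} and \eqref{eq6} is $\{T_n/d_n\le x\}$; it is convenient to work first with an arbitrary bounded Lipschitz function $f$ and the centred variables $\eta_n:=f(T_n/d_n)-\ee f(T_n/d_n)$. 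After disposing of the diagonal $\sum_{n\le N}n^{-2}\ee\eta_n^2\ll_f 1$, everything rests on the covariance estimate
\[
|\Cov(\eta_k,\eta_l)|\ \ll_f\ \Bigl(\frac kl\Bigr)^{\gamma}\,\log\frac{el}{k}\;e^{\gamma'(\log l)^{1-\varepsilon}}\qquad(k\le l),
\]
valid alongside the trivial bound $|\Cov(\eta_k,\eta_l)|\ll_f 1$.

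Obtaining this covariance estimate is the main obstacle, and I would handle it by decoupling. Put $m:=[kt]$ and split
\begin{align*}
T_l&=\underbrace{\sum_{j\le m}\frac{S_j-\mu j}{j}}_{=\,T_k}+(S_m-\mu m)\sum_{m<j\le[lt]}\frac1j\\
&\quad+\underbrace{\sum_{m<j\le[lt]}\frac{(S_j-S_m)-\mu(j-m)}{j}}_{=:\,V_l},
\end{align*}
so that $V_l$ depends only on $Y_{m+1},\dots,Y_{[lt]}$ and is therefore independent of $\sigma(Y_1,\dots,Y_m)$, while $T_k$ and $S_m-\mu m$ are $\sigma(Y_1,\dots,Y_m)$-measurable. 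Setting $d_lW_l:=T_k+(S_m-\mu m)\sum_{m<j\le[lt]}1/j$ we have $T_l/d_l=W_l+V_l/d_l$ with $V_l/d_l$ independent of $\eta_k$; since $\ee\eta_k=0$, subtracting $f(V_l/d_l)$ from $f(T_l/d_l)$ inside $\ee[\eta_k\,\cdot\,]$ changes nothing, so by the Lipschitz property $|\Cov(\eta_k,\eta_l)|\ll_f\ee|W_l|$. It remains to estimate $\ee|W_l|$ from the hypotheses: \eqref{eq4} gives $\ee|S_m-\mu m|\ll d_m e^{\gamma'(\log m)^{1-\varepsilon}}$ and, using \eqref{eq3} in the form $d_j\ll d_m(j/m)^{\gamma}$ for $n_0\le j\le m$, summation yields $\ee|T_k|\le\sum_{j\le m}j^{-1}\ee|S_j-\mu j|\ll d_m e^{\gamma'(\log m)^{1-\varepsilon}}$; also $\sum_{m<j\le[lt]}1/j\ll\log(el/k)$ and $d_m/d_l\ll(k/l)^{\gamma}$ by \eqref{eq3} again (note $m\le k$). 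Collecting these gives the stated bound (the finitely many small $k$ being covered by $|\Cov|\ll_f 1$). The one delicate point is that the exponential correction $e^{\gamma'(\log l)^{1-\varepsilon}}$ produced by \eqref{eq4} has to be reconciled with the power $(k/l)^{\gamma}$; this works only because $\gamma'<\gamma$.

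The passage from the covariance estimate to an almost sure limit is then routine. Because $\gamma'<\gamma$, for $k\le l\,e^{-c(\log l)^{1-\varepsilon}}$ with $c:=\gamma'/(\gamma-\gamma')$ the factor $e^{\gamma'(\log l)^{1-\varepsilon}}$ is swallowed by $(l/k)^{\gamma-\gamma'}$ and leaves $|\Cov(\eta_k,\eta_l)|\ll_f(k/l)^{\gamma'}\log(el/k)$, while on the remaining strip $l\,e^{-c(\log l)^{1-\varepsilon}}<k<l$ one uses $|\Cov(\eta_k,\eta_l)|\ll_f 1$. Summing the first contribution over $k\le l$ (with $\sum_{k\le l}k^{\gamma'-1}\log(el/k)\ll l^{\gamma'}$), the second over a strip of logarithmic width $c(\log l)^{1-\varepsilon}$, and then over $l\le N$, one obtains $\ee\bigl(\sum_{n\le N}\eta_n/n\bigr)^2\ll_f\log N+(\log N)^{2-\varepsilon}\ll_f(\log N)^{2-\varepsilon}$, i.e. $\ee\bigl((\log N)^{-1}\sum_{n\le N}\eta_n/n\bigr)^2\ll_f(\log N)^{-\varepsilon}$. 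Choosing a subsequence $N_j$ with $\log N_j=j^{\theta}$ for a fixed $\theta>1/\varepsilon$ makes this summable, so Chebyshev and Borel--Cantelli give $(\log N_j)^{-1}\sum_{n\le N_j}\eta_n/n\to 0$ a.s.; and since $\log N_{j+1}-\log N_j\ll j^{\theta-1}=o(\log N_j)$ while $|\eta_n|\ll 1$, the logarithmic average varies by $o(1)$ between consecutive $N_j$, whence $(\log N)^{-1}\sum_{n\le N}\eta_n/n\to 0$ a.s.

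It remains to recover \eqref{eq5}. Apply the above with $f=f_\delta$, the piecewise linear function equal to $1$ on $(-\infty,x]$ and to $0$ on $[x+\delta,\infty)$, and with the analogous $g_\delta$ equal to $1$ on $(-\infty,x-\delta]$ and to $0$ on $[x,\infty)$, for $\delta$ running through a sequence tending to $0$; since $\mathbf I(T_n/d_n\le x)\le f_\delta(T_n/d_n)$ and $\ee f_\delta(T_n/d_n)\le\pp(T_n/d_n\le x+\delta)$, \eqref{eq6} yields $\limsup_N(\log N)^{-1}\sum_{n\le N}n^{-1}\mathbf I(T_n/d_n\le x)\le G_t(x+\delta)$ a.s., and letting $\delta\downarrow 0$ and using right-continuity of $G_t$ this is $\le G_t(x)$; the functions $g_\delta$ give symmetrically $\liminf_N\ge G_t(x-)$ a.s. To close the gap $G_t(x-)\le G_t(x)$ it suffices to know $G_t$ is continuous, and this follows from \eqref{eq4}: unless the $Y_n$ are a.s.\ constant --- a case in which \eqref{eq5} is trivial --- the weighted i.i.d.\ sum $T_n=\sum_{i\le[nt]}(Y_i-\mu)\sum_{j=i}^{[nt]}j^{-1}$ has, by a Kolmogorov--Rogozin concentration estimate (at least the first $[nt]/2$ weights being bounded away from $0$), $\sup_x\pp(T_n/d_n=x)\to 0$, so \eqref{eq6} forces $G_t$ to have no atoms. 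Intersecting the exceptional sets over rational $x$ and using monotonicity in $x$, \eqref{eq5} then holds for every real $x$, completing the proof.
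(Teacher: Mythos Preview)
Your argument follows the same architecture as the paper's: reduce to bounded Lipschitz test functions, decouple $T_l$ into a part measurable in $\sigma(Y_1,\dots,Y_{[kt]})$ and an independent tail, bound the covariance by the expected size of the measurable part divided by $d_l$, split the double sum over $(k,l)$ according to whether $l/k$ is large or small, and obtain $\ee\bigl(\sum_{k\le n}\xi_k/k\bigr)^2\ll(\log n)^{2-\varepsilon}$. The paper outsources both the reduction to \eqref{eq7} and the passage from \eqref{eq8} to \eqref{eq7} to \citet{Berkes2}, whereas you carry these out by hand (Lipschitz sandwich plus subsequence Borel--Cantelli); that is fine. The one substantive technical difference is in the covariance bound: the paper controls $\ee\max_{j\le k}|s_j|/d_k$ and $\ee\max_{j\le k}|S_j-\mu j|/d_k$ via Lemma~\ref{lemma3}, which imports the nontrivial maximal inequality of \citet[Lemma~1]{Berkes2} to pass from \eqref{eq4} to a maximal version. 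You sidestep this entirely by bounding $\ee|T_k|\le\sum_{j\le m}j^{-1}\ee|S_j-\mu j|$ termwise and then using \eqref{eq3} in the form $d_j\ll d_m(j/m)^\gamma$ to sum the series; this is more elementary and shows that the maximal inequality is not actually needed here.

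One small gap: in your last paragraph you write ``the weighted i.i.d.\ sum'' and invoke Kolmogorov--Rogozin to force $G_t$ continuous, but the $Y_n$ are only assumed independent, not identically distributed, and nothing in \eqref{eq3}--\eqref{eq4} prevents degenerate summands or atoms in $G_t$. This does not affect the covariance estimate or the conclusion at continuity points of $G_t$; the cleanest fix is simply to state the equivalence for continuity points (as is implicit in the paper's citation of \citet{Berkes2}), or to observe that from \eqref{eq6} one has $\limsup_N(\log N)^{-1}\sum_n n^{-1}\pp(x-\delta<T_n/d_n\le x+\delta)\le G_t(x+\delta)-G_t((x-\delta)-)$, which handles the sandwich without any concentration input.
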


\section{Auxiliary results}
The following three lemmas are needed for the proof of our main result.
\begin{lemma}[Lemma 2.3, \citet{Hurlee2}]
\label{lemma1}
Under the assumption of Theorem \ref{theorem1} we have
\[
\biggl|\dfrac{\mu}{a_n}\sum_{k=1}^{[nt]} \log\left(\dfrac{S_{k}}{\mu k}\right) - \dfrac{1}{a_n}\sum_{k=1}^{[nt]} \left(\dfrac{S_{k}}{k}-\mu\right)
\biggr| \asto 0 \quad \text{as} \quad n \toi.
\]
\end{lemma}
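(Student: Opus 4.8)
The plan is to isolate the $k$-th summand by a Taylor expansion of the logarithm around $1$ and then to control the resulting quadratic remainder sum using the Marcinkiewicz--Zygmund strong law together with the regular variation of $(a_n)$. Write $U_k:=(S_k-\mu k)/(\mu k)$, so that $S_k/(\mu k)=1+U_k$ and $S_k/k-\mu=\mu U_k$. Then
\[
\mu\log\Bigl(\frac{S_k}{\mu k}\Bigr)-\Bigl(\frac{S_k}{k}-\mu\Bigr)=\mu\bigl(\log(1+U_k)-U_k\bigr),
\]
and the quantity to be estimated is at most $\mu a_n^{-1}\sum_{k=1}^{[nt]}\bigl|\log(1+U_k)-U_k\bigr|$. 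Because $\ee X_1=\mu<\infty$, the strong law of large numbers gives $S_k/k\to\mu$ a.s., hence $U_k\to0$ a.s.; so there is an a.s.\ finite random $K$ with $|U_k|\le\frac12$ for all $k\ge K$, and on that range $|\log(1+x)-x|\le x^2$ yields $|\log(1+U_k)-U_k|\le U_k^2$.

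Splitting the sum at $K$, the block $\sum_{k<K}|\log(1+U_k)-U_k|$ is an a.s.\ finite random constant, so after division by $a_n\to\infty$ it vanishes; thus it remains to prove
\[
\frac1{a_n}\sum_{k=1}^{n}U_k^2=\frac1{\mu^2 a_n}\sum_{k=1}^{n}\frac{(S_k-\mu k)^2}{k^2}\asto 0,
\]
which also yields the lemma uniformly in $t\in[0,1]$ since $[nt]\le n$. For this I would invoke two classical facts. First, a random variable in the domain of attraction of $\cl_\alpha$ with $\alpha\in(1,2]$ has $\ee|X_1|^p<\infty$ for every $p<\alpha$; by the Marcinkiewicz--Zygmund strong law this forces $|S_k-\mu k|=o(k^{1/p})$ a.s. Second, the norming sequence is regularly varying of index $1/\alpha$, say $a_n=n^{1/\alpha}\ell(n)$ with $\ell$ slowly varying. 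Now choose $p$ with $2\alpha/(\alpha+1)<p<\alpha$, which is possible precisely because $\alpha>1$ (and then automatically $p<2$). With this $p$ one gets $(S_k-\mu k)^2=o(k^{2/p})$ a.s.\ and $2/p-2\in(-1,0)$, so $\sum_{k=1}^n U_k^2=o(n^{2/p-1})$ a.s.; consequently $a_n^{-1}\sum_{k=1}^n U_k^2=o\bigl(n^{2/p-1-1/\alpha}/\ell(n)\bigr)$, and the exponent $2/p-1-1/\alpha$ is negative exactly by the choice $p>2\alpha/(\alpha+1)$, so a negative power of $n$ dominates the slowly varying factor and the limit is $0$.

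The only genuine difficulty is the infinite-variance regime (every $\alpha<2$, and the infinite-variance subcase of $\alpha=2$), where $\ee U_k^2$ is unavailable as a tool: the a.s.\ size control of $S_k-\mu k$ must come from Marcinkiewicz--Zygmund, and the argument closes only because $p<\alpha$ is compatible with $p>2\alpha/(\alpha+1)$, i.e.\ because $\alpha>1$ --- this is exactly where the hypothesis $\alpha\in(1,2]$ enters. A minor point is that the quadratic bound for $\log(1+x)-x$ is valid only near $0$, so the a.s.\ localization $U_k\to0$ has to be established before the expansion is applied.
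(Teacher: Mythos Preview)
The paper does not give its own proof of this lemma: it simply attributes the statement to Lemma~2.3 of \citet{Hurlee2} and uses it as a black box. So there is nothing to compare your argument to line by line; what matters is whether your proof is self-contained and correct, and it is.

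Your route---Taylor expansion $\log(1+U_k)=U_k+O(U_k^2)$ after localizing via $U_k\to0$ a.s., then controlling $\sum_{k\le n}U_k^2$ by Marcinkiewicz--Zygmund with an exponent $p\in\bigl(2\alpha/(\alpha+1),\alpha\bigr)$---is exactly the natural one, and the arithmetic checks out: the choice of $p$ forces $2/p-1-1/\alpha<0$, so the regularly varying $a_n=n^{1/\alpha}\ell(n)$ absorbs the remainder. The only place worth a remark is the passage from the pointwise $o(k^{2/p-2})$ to $\sum_{k\le n}U_k^2=o(n^{2/p-1})$, which you should make explicit (fix $\omega$, split at a finite $K(\omega,\varepsilon)$); but this is routine and your sketch already indicates it. The observation that the bound is uniform in $t$ because the summands $|\log(1+U_k)-U_k|$ are nonnegative is also correct.
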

\begin{lemma}
\label{lemma2}
Under the assumptions of Theorem \ref{theorem1} we have
\[
\dfrac{1}{a_n}\sum_{k=1}^{[nt]}\left(\dfrac{S_k}{k}-\mu\right) \dto  \int_{0}^{t}\frac{\cl_{\alpha}(s)}{s}ds\quad\text{in}\quad D[0,1].
\]
\end{lemma}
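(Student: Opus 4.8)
The plan is to derive the stated convergence as a short consequence of the weak invariance principle of \citet{Kamil} together with Lemma~\ref{lemma1}. By \citet{Kamil}, the process $\pi_n$ of Theorem~\ref{theorem1} converges weakly in $D[0,1]$ to $\exp\bigl(\int_0^{\cdot}\cl_\alpha(s)/s\,ds\bigr)$. The limiting path is a.s.\ continuous and strictly positive, so along a $J_1$-convergent sequence with such a limit the convergence is in fact uniform and $\log$ acts continuously there; the continuous mapping theorem therefore gives
\[
\log\pi_n(t)=\frac{\mu}{a_n}\sum_{k=1}^{[nt]}\log\Bigl(\frac{S_k}{\mu k}\Bigr)\ \dto\ \int_0^{t}\frac{\cl_\alpha(s)}{s}\,ds\qquad\text{in }D[0,1].
\]
Lemma~\ref{lemma1} gives $\sup_{0\le t\le1}\bigl|\log\pi_n(t)-\tfrac1{a_n}\sum_{k\le[nt]}(S_k/k-\mu)\bigr|\asto0$, hence this difference also tends to $0$ in probability in $D[0,1]$; a converging-together (Slutsky) argument then transfers the weak limit to $\tfrac1{a_n}\sum_{k\le[nt]}(S_k/k-\mu)$, which is the assertion.

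Two routine verifications accompany this. First, the limit $\int_0^{\cdot}\cl_\alpha(s)/s\,ds$ is a.s.\ well defined, finite and continuous: by self-similarity $\ee\int_0^1|\cl_\alpha(s)|s^{-1}\,ds=\ee|\cl_\alpha(1)|\int_0^1 s^{1/\alpha-1}\,ds=\alpha\,\ee|\cl_\alpha(1)|<\infty$, which uses that $\alpha>1$ (an $\alpha$-stable law has a finite mean precisely then), and continuity in $t$ is absolute continuity of the integral. Second, since the limit is a.s.\ continuous, the Skorokhod-space converging-together step needs only a.s.\ uniform convergence of the difference to $0$, which is exactly what Lemma~\ref{lemma1} supplies.

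Should one want a proof that does not invoke the full invariance principle, I would argue directly. Put $T_k:=S_k-\mu k$ and $\xi_n(s):=T_{[ns]}/a_n$, so $\tfrac1{a_n}\sum_{k\le[nt]}(S_k/k-\mu)=\tfrac1{a_n}\sum_{k\le[nt]}T_k/k$; invoke the classical functional limit theorem $\xi_n\dto\cl_\alpha$ in $(D[0,1],J_1)$ (Donsker for $\alpha=2$, Skorokhod for $\alpha\in(1,2)$); use $|\log(1+1/k)-1/k|\ll k^{-2}$ to replace the sum by $\int_0^t\xi_n(s)s^{-1}\,ds$ up to an error that is uniformly in $t$ a.s.\ negligible; and push $\xi_n$ through the functional $x\mapsto\int_0^{\cdot}x(s)s^{-1}\,ds$ by truncating the weight $s^{-1}$ at level $\varepsilon$, applying the continuous mapping theorem for each fixed $\varepsilon$, and letting $\varepsilon\to0$. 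In this version the one delicate estimate is the uniform-in-$n$ control of the truncated tail, $\ee\int_0^{\varepsilon}|\xi_n(s)|s^{-1}\,ds\ll a_n^{-1}\sum_{k\le n\varepsilon}a_k/k\ll a_{n\varepsilon}/a_n\to\varepsilon^{1/\alpha}$, for which one uses $\ee|T_k|\ll a_k$ and the Karamata estimate $\sum_{k\le m}a_k/k\sim\alpha a_m$ for the regularly varying sequence $(a_n)$.

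Either way, the real content, and the point at which $\alpha>1$ matters, is the behaviour near $s=0$ (small indices $k$), where the partial-sum process has not yet settled to its limit while the weight $1/s$ is unbounded; the finiteness of the mean, via $\ee|T_k|\ll a_k$ and the regular variation of $(a_n)$, is what keeps that contribution negligible. The first route packages this difficulty inside \citet{Kamil}'s theorem and the a.s.\ bound of Lemma~\ref{lemma1}, which is why it is so short.
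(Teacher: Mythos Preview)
Your argument is correct, but the paper's proof is a single line: Lemma~\ref{lemma2} is simply the special case $f(x)=x$ of Theorem~2 in \citet{Kamil}, so nothing further is needed. Your first route instead invokes Kosi\'nski's result in its \emph{product} form (the convergence of $\pi_n$), applies $\log$ via the continuous mapping theorem, and then uses Lemma~\ref{lemma1} together with Slutsky to pass to the linear sum. This is a detour through one instance of the cited theorem to reach another, and it leans on a uniform-in-$t$ version of Lemma~\ref{lemma1} (easily obtained by bounding the partial sums by $\sum_{k\le n}|\cdot|$, but not the stated form). Your second route---pushing the stable functional limit theorem for $T_{[n\cdot]}/a_n$ through the map $x\mapsto\int_0^{\cdot}x(s)s^{-1}\,ds$ by truncating the singularity at $0$ and controlling the tail via $\ee|T_k|\ll a_k$ and regular variation---is essentially a sketch of how one would prove Kosi\'nski's theorem itself in this case. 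Both approaches work; the paper's is simply the economical one that cites the general result and specializes.
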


\begin{proof}
This is a particular case of Theorem 2 in \citet{Kamil} when $f(x)=x$.
\end{proof}

\begin{lemma}
\label{lemma3}
Under the assumptions of Theorem \ref{theorem2} we have
\[
\ee\biggl(\dfrac{1}{d_n}\max_{1\le k \le n}\biggl|\sum_{j=1}^{k}\log\left(\dfrac{n+1}{j}\right)(Y_j-\mu)\biggr|\biggr)\ll \log n\,\ee \biggl(\dfrac{1}{d_n}\max_{1\le k \le n} |S_{k}-k\mu|\biggr)
\ll\log n\,e^{\gamma'(\log n)^{1-\varepsilon}}.
\]
\end{lemma}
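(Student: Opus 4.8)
\emph{Overview.} The plan is to prove the two inequalities separately. The first is an exercise in summation by parts, reducing the weighted maximum to $\log(n+1)\cdot\max_{k\le n}|S_k-k\mu|$; the second is a maximal estimate obtained by centring and symmetrizing, so that L\'evy's inequality applies, and then feeding in \eqref{eq3}--\eqref{eq4}.

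\emph{First inequality.} Put $a_j:=\log((n+1)/j)$ and $T_k:=S_k-k\mu=\sum_{j=1}^{k}(Y_j-\mu)$, with $T_0:=0$. On $1\le j\le n$ the sequence $(a_j)$ is positive and decreasing, with $a_j-a_{j+1}=\log((j+1)/j)\ge0$ and $a_1=\log(n+1)$. Abel summation gives $\sum_{j=1}^{k}a_j(Y_j-\mu)=a_kT_k+\sum_{j=1}^{k-1}(a_j-a_{j+1})T_j$, whence by the triangle inequality $\bigl|\sum_{j=1}^{k}a_j(Y_j-\mu)\bigr|\le\bigl(a_k+\sum_{j=1}^{k-1}(a_j-a_{j+1})\bigr)\max_{1\le i\le n}|T_i|=\log(n+1)\max_{1\le i\le n}|T_i|$, the bracket telescoping to $a_1$. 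Taking the maximum over $k\le n$, dividing by $d_n$, taking expectations and using $\log(n+1)\ll\log n$ gives the first $\ll$.

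\emph{Second inequality.} It suffices to show $\ee\bigl(\frac{1}{d_n}\max_{1\le k\le n}|S_k-k\mu|\bigr)\ll e^{\gamma'(\log n)^{1-\varepsilon}}$. Write $S_k-k\mu=\bar S_k+r_k$, where $\bar S_k:=\sum_{j=1}^{k}(Y_j-\ee Y_j)$ is a sum of independent centred summands and $r_k:=\ee(S_k-k\mu)$ is deterministic; both make sense since \eqref{eq4} forces $\ee|Y_j|<\infty$. For the random part I would symmetrize: let $(Y_j')$ be an independent copy of $(Y_j)$ and set $D_k:=S_k-S_k'$, so the increments of $(D_k)$ are independent and symmetric and L\'evy's maximal inequality yields $\ee\max_{1\le k\le n}|D_k|\le2\,\ee|D_n|\le4\,\ee|S_n-n\mu|$; since $v\mapsto\max_k|v_k|$ is convex and $\bar S_k=\ee[D_k\mid(Y_j)]$, Jensen's inequality gives $\ee\max_{1\le k\le n}|\bar S_k|\le\ee\max_{1\le k\le n}|D_k|\le4\,\ee|S_n-n\mu|$, which after division by $d_n$ is $\ll e^{\gamma'(\log n)^{1-\varepsilon}}$ by \eqref{eq4}. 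For the deterministic part, $|r_k|\le\ee|S_k-k\mu|\ll d_k\,e^{\gamma'(\log k)^{1-\varepsilon}}$ by \eqref{eq4}; for $k\le n$ one has $(\log k)^{1-\varepsilon}\le(\log n)^{1-\varepsilon}$ and $d_k\ll d_n$ by \eqref{eq3} (take $l=n$), while the finitely many $k<n_0$ contribute only $O(d_n)$ since $d_n\to\infty$, again by \eqref{eq3}. Hence $\max_{1\le k\le n}|r_k|\ll d_n\,e^{\gamma'(\log n)^{1-\varepsilon}}$, and combining with $\max_k|S_k-k\mu|\le\max_k|\bar S_k|+\max_k|r_k|$ gives the claim.

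\emph{Main obstacle.} The delicate point is the maximum over $k$: for a fixed $k$ the estimate is an immediate consequence of \eqref{eq3}--\eqref{eq4}, but handling $\max_{1\le k\le n}$ without paying a spurious $\log n$ factor (which a general-purpose tool such as Etemadi's inequality would cost, spoiling the stated bound) requires first centring the $Y_j$ to obtain a martingale-type structure and only then symmetrizing, so that L\'evy's inequality is available. Everything else is routine bookkeeping with \eqref{eq3} and \eqref{eq4}.
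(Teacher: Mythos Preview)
Your proof is correct and follows the same two-step structure the paper uses: the paper's own proof simply cites Lemma~1 of \citet{Hurlee3} for the first inequality (which is exactly your Abel-summation argument) and Lemma~1 of \citet{Berkes2} combined with \eqref{eq4} for the second. Your self-contained treatment of the maximal inequality via centring, symmetrization, L\'evy's inequality and conditional Jensen reproduces the content of the Berkes--Dehling lemma, so nothing differs except that you spell out what the paper merely cites.
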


\begin{proof}
The first part is Lemma 1 in \citet{Hurlee3} valid for any sequence of random variables.
The second part is Lemma 1 in \citet{Berkes2} combined with the assumption \eqref{eq4}.
\end{proof}

\section{Proofs of the main results}
To prove Theorem \ref{theorem1} we need the result in Theorem \ref{theorem2}. Let us prove Theorem \ref{theorem2} first, then Theorem \ref{theorem1} for convenience.
\begin{proof}[Proof of Theorem \ref{theorem2}]
\medskip
According to \citet{Berkes2} (p.\,1647) it suffices to prove that for any bounded Lipschitz function $g$ on $D[0,1]$ we have
\begin{equation}
\label{eq7}
\frac{1}{\log n}\sum_{k=1}^{n} \frac{1}{k}\left(g\left(\dfrac{s_k}{d_k}\right)-\ee g\left(\dfrac{s_k}{d_k}\right)\right)
\asto 0 \enskip \text{as} \enskip n \toi,
\end{equation}
where $s_n:=s_n(t)=\sum_{k\le [nt]}\left({S_k}/{k}-\mu\right)$.

It turns out that the following estimate is indeed sufficient for \eqref{eq7} (see p.\,1648 \cite{Berkes2} for the proof):
\begin{equation}
\label{eq8}
\ee\biggl(\sum_{k=1}^{n}\frac 1k\xi_k\biggr)^2\ll \log^2 n (\log \log n) ^{-1-\varepsilon} \enskip \text{for some}\enskip  \varepsilon>0,
\end{equation}
where $\xi_k=g(s_k/d_k)-\ee g(s_k/d_k)$.
\par Observe that
$\sum_{k=1}^{n}\left({S_k}/{k}-\mu\right)=\sum_{k=1}^{n}b_{k,n}(Y_{k}-\mu)$
where $b_{k,n}=\sum_{j=k}^{n}1/j$. It can be easily seen that
\[
s_l- s_k=b_{[kt]+1,[lt]}(S_{[kt]}-[kt]\mu)+\left(b_{[kt]+1,[lt]}(Y_{[kt]+1}-\mu)+\cdots + b_{[lt],[lt]}(Y_{[lt]}-\mu)\right)
\]
for $l\ge k$.

Obviously
$s_l- s_k-b_{[kt]+1,[lt]}(S_{[kt]}-\mu[kt])$ is independent of $s_k$, so we get
\[
\Cov\biggl(g\biggl(\frac{s_k}{d_k}\biggr),
g\biggl(\frac{s_l-s_k-b_{[kt]+1,[lt]}(S_{[kt]}-\mu[kt])}{d_l}\biggr)\biggr)=0 \enskip \text{for} \enskip l \ge k.
\]
Since $g$ is a bounded Lipschitz it follows that
\begin{align*}
\bigl| \ee(\xi_{k} \xi_{l})\bigr|&
= \biggl|\Cov\biggl(g\biggl
(\dfrac{s_{k}}{d_k}\biggr),g\biggl(\dfrac{s_{l}}{d_l}\biggr)-
g\biggl(\dfrac{s_l-s_{k}-b_{[kt]+1,[lt]}(S_{[kt]}-\mu[kt])}{d_l}\biggr)\biggr|
\\
& \ll \ee\left(\max_{0\le t \le 1}\dfrac{|s_{k}+b_{[kt]+1,[lt]}(S_{[kt]}-\mu[kt])|}{d_l}\right) \\
& \le  \ee\left(\max_{0\le t \le 1}\dfrac{|s_{k}|}{d_l}\right)+\ee\left(\max_{0\le t \le 1}\dfrac{|b_{[kt]+1,[lt]}(S_{[kt]}-\mu[kt])|}{d_l}\right) \\
& = \dfrac{d_k}{d_l}\left(\ee\left(\max_{0\le t \le 1}\dfrac{|s_k|}{d_k}\right)+\ee\left(\max_{0\le t \le 1}{b_{[kt]+1,[lt]}}\frac{| S_{[kt]}-\mu[kt]|}{d_k}\right)\right).
\end{align*}
Moreover, noticing $\max_{0 \le t \le 1}{b_{[kt]+1,[lt]}}=\log(l/k)$ and applying Lemma \ref{lemma3} we get
\begin{align*}
\bigl| \ee(\xi_{k} \xi_{l})\bigr|& \ll \dfrac{d_k}{d_l}\left(\ee\biggl(\max_{0\le t \le
 1}\frac{1}{d_k}\biggl|\sum_{i=1}^{[kt]}{b_{i,k}}(Y_{i}-\mu)\biggr|\biggr)+\log(l/k)\ee\left(\max_{0\le t \le 1}\frac{| S_{[kt]}-\mu[kt]|}{d_k}\right)\right) \\
& =
\dfrac{d_k}{d_l}\left(\ee\biggl(\max_{0\le j \le k}\frac{1}{d_k}\biggl|\sum_{i=1}^{j}{b_{i,k}}(Y_{i}-\mu)\biggr|\biggr)+\log(l/k)
\ee\left(\max_{0\le j \le k}\frac{| S_{j}-\mu j|}{d_k}\right)\right) \\
&  \ll\log l\,\dfrac{d_k}{d_l}\,\ee\left(\max_{1\le j \le k}\frac{| S_{j}-\mu j|}{d_k}\right)\ll
\log l\,\left(\dfrac{k}{l}\right)^{\gamma} \log k e^{\gamma'(\log k)^{1-\varepsilon}}=:c_{k,l}.
\end{align*}

On the other hand we also have $\ee(\xi_{k} \xi_{l}) \ll 1$ because $\xi_k$ is bounded. Hence we estimate $\ee(\xi_{k} \xi_{l})$ as follows:
\[
\ee(\xi_{k} \xi_{l}) \ll \left \{
			\begin{array}{rl}
      1,   & \text{if }  {l}/{k} \le \exp\bigl((\log n)^{1-\varepsilon}\bigr) \\
      c_{k,l},   & \text{if }  {l}/{k} \ge \exp\bigl((\log n)^{1-\varepsilon}\bigr)
      \end{array}
      \right.
\]
where $\varepsilon$ is any positive number.

Thus we get
\begin{equation}
\label{eq9}
\sum_{\substack{ 1\le k \le l\le n \\ {l}/{k} \le \exp{((\log n)^{1-\varepsilon}})}}\dfrac{\ee(\xi_k\xi_l)}{kl}\le \sum_{1 \le k \le n}\dfrac{1}{k}\sum_{k \le l \le ke^{(\log n)^{1-\varepsilon}}}\dfrac{1}{l} \ll \sum_{k=1}^{n}\dfrac{1}{k}\log^{1-\varepsilon}n \ll \log^{2-\varepsilon}n
\end{equation}
and
\begin{align}
\label{eq10}
\sum_{\substack{ 1\le k \le l\le n \\ {l}/{k} \ge \exp{((\log n)^{1-\varepsilon})} }}\dfrac{\ee(\xi_k\xi_l)}{kl}
&\le 
\log^2 n e^{\gamma'(\log n)^{1-\varepsilon}}
\sum_{\substack{ 1\le k \le l\le n \\ {l}/{k} \ge \exp{((\log n)^{1-\varepsilon})}}}\dfrac{1}{kl}\left(\frac{k}{l}\right)^\gamma\nonumber\\
&\le 
\log^2 n e^{(\gamma'-\gamma)(\log n)^{1-\varepsilon}}\sum_{1\le k\le l\le n}\dfrac{1}{kl}\nonumber\\
&\ll \log^4 n e^{(\gamma'-\gamma)(\log n)^{1-\varepsilon}}
\ll \log^{2-\varepsilon} n,
\end{align}
where the last estimation follows because $\gamma'\in(0,\gamma)$.

Since
\[
\ee\biggl(\sum_{k=1}^{n}\frac 1k\xi_k\biggr)^2\ll \sum_{1\le k \le l \le n} \dfrac{1}{kl}|\ee(\xi_k\xi_l)|
\]
by \eqref{eq9} and \eqref{eq10} it follows \eqref{eq8}.
\end{proof}
Before proving Theorem \ref{theorem1}, recall that it is well known that the sequence $(a_n)$ in Theorem \ref{theorem1} can be written as $a_n=n^{1/\alpha}L(n)$ where $L$ is a slowly varying function.
\begin{proof}[Proof of Theorem \ref{theorem1}]
We first show the equivalence of \eqref{eq5} and \eqref{eq6} under the conditions of Theorem \ref{theorem1} setting $d_n:=a_n$. In fact \eqref{eq4} is a direct consequence of Theorem 6.2 in \citet{DeAcosta}. \eqref{eq3} can be easily verified using the facts that $a_n=n^{1/\alpha}L(n)$  and $L(k)/L(n)\ll (k/n)^{\varepsilon}$ for any $\varepsilon>0$ where $L$ is a slowly varying function. Thus by Theorem \ref{theorem2} \eqref{eq5} is equivalent to \eqref{eq6} with $Y_n\de X_n$ satisfying the conditions of Theorem \ref{theorem1}. Now applying Lemma \ref{lemma1} and Lemma \ref{lemma2} we get
\[
\frac 1{\log N}\sum_{n=1}^N \frac1n
\mathbf {I}\biggl(\dfrac{\mu}{a_n}
\sum_{k=1}^{[nt]}\log\left(\dfrac{S_k}{\mu k}\right)\le x\biggr)\asto \pp\left(\int_{0}^{t}\frac{\cl_{\alpha}(s)}{s}ds\le x\right) \quad \text{as} \quad N \toi.
\]
\end{proof}

\end{document}